\newtheorem{Th}{Theorem}
\newtheorem{Lm}{Lemma}
\newtheorem{Exam}{Example}
\newtheorem{Rem}{Remark}
\newenvironment{Ex}{\begin{Exam}\normalfont}{\end{Exam}}
\title
{{Landesman-Lazer condition revisited: the influence \\
of vanishing and oscillating nonlinearities}}
\author
{ Pavel Dr\'abek \thanks{Department of Mathematics and NTIS, University of West Bohemia, Univerzitn\'i 8, 306 14 Plze\v{n}, Czech Republic, email:
pdrabek@kma.zcu.cz; corresponding author} ,
Martina
Langerov\'a \thanks{NTIS, University of West Bohemia, Univerzitn\'i 8, 306 14 Plze\v{n}, Czech Republic, email: mlanger@ntis.zcu.cz}}
\date{}
\begin{document}

\maketitle

\vspace{3mm}

\noindent{\fontsize{11pt}{0.17in} \emph {\textbf{Abstract.}
In this paper we deal with semilinear problems at resonance. We present a sufficient condition for the existence of a weak solution in terms of the asymptotic properties of nonlinearity. Our condition generalizes the classical Landesman-Lazer condition but it also covers the cases of vanishing and oscillating nonlinearities.}
\vspace{3mm}
\newline \emph {\textbf{Keywords.} resonance problem; semilinear equation; Landesman-Lazer condition; saddle point theorem; critical points.}
\newline \emph {\textbf{AMS Subject Classification.} Primary 35J20, 35J25. Secondary 35B34, 35B38.}
}

\vspace{1cm}

\section{Introduction}

Let $\Omega \subseteq \mathbb{R}^{n}$ be a bounded domain, $g: \mathbb{R} \rightarrow \mathbb{R}$ be a bounded continuous function and $f \in L^{2}(\Omega)$. We consider the boundary value problem
\begin{equation}\label{11}
\begin{aligned}
- \Delta u - \lambda_{k} u + g(u)& = f \quad  {\text {in}} \; \Omega ,  \\
u & = 0 \quad  {\rm on} \; \partial\Omega .
\end{aligned}
\end{equation}
Here $\lambda_{k}$, $k\geq 1$, is the $k$-th eigenvalue of the eigenvalue problem
\begin{equation}\label{12}
\begin{aligned}
- \Delta u - \lambda u & = 0 \quad  {\text {in}} \; \Omega ,  \\
u & = 0 \quad  {\rm on} \; \partial\Omega .
\end{aligned}
\end{equation}
By a \textit{solution} of \eqref{11} we understand a function
$u \in H:= W^{1,2}_{0}(\Omega)$ satisfying \eqref{11} in the weak sense, \textit{i.e.},
\begin{equation}\label{13}
\int\limits_{\Omega} \nabla u \nabla v \, {\rm d}x- \lambda_{k}
\int\limits_{\Omega} u v \, {\rm d} x + \int\limits_{\Omega} g(u) v \, {\rm d} x =
\int\limits_{\Omega} f v \, {\rm d} x
\end{equation}
holds for any test function $v \in H$.

Let $ m \geq 1$ be a multiplicity of $\lambda_{k}$. We arrange the eigenvalues of \eqref{12} into the increasing sequence:
$$ 0< \lambda_{1} < \lambda_{2} \leq \ldots \leq \lambda_{k-1} < \lambda_{k} = \ldots = \lambda_{k+m-1} < \lambda_{k+m} \leq \lambda_{k+m+1} \leq \ldots \rightarrow \infty.$$
The corresponding eigenfunctions, $(\phi_{n})$, form an orthogonal basis for both $L^{2}(\Omega)$ and $H$. We assume
that every $\phi_{n}$ is normalized with respect to the $L^{2}$ norm, \textit{i.e.}, $\| \phi_{n} \|_{2} =1$, $n = 1, 2, \ldots $ .

We use the scalar product $(u,v) = \int\limits_{\Omega} \nabla u \nabla v \, {\rm d}x$ and
the induced norm $\| u \| = \left (\int\limits_{\Omega} |\nabla u|^{2} \, {\rm d}x \right )^{\frac{1}{2}}$ on $H$. We split
the space $H$ into the following three subspaces spanned by the eigenfunctions of \eqref{12} as follows:
$$\hat{H} := [\phi_{1}, \ldots , \phi_{k-1}], \quad \bar{H} := [\phi_{k}, \ldots , \phi_{k+m-1}],\quad \tilde{H} := [ \phi_{k+m}, \phi_{k+m+1}, \ldots ]. $$
Then $H = \hat{H} \oplus \bar{H} \oplus \tilde{H}$ with $\dim \hat{H} = k-1$, $\dim \bar{H} = m$, $\dim \tilde{H} = \infty$.
Of course, if $k =1$ then $m =1$  and $\hat{H} = \emptyset$. We split an element $u \in H$ as $u = \hat{u} + \bar{u} + \tilde{u}$, $\hat{u} \in \hat{H}, \bar{u} \in \bar{H}$ and $\tilde{u} \in \tilde{H}$. A function $f \in L^{2}(\Omega)$ we split as
$f = \bar{f} + f^{\bot}$, where $\int\limits_{\Omega} f^{\bot} v \, {\rm d} x = 0$ for any $v \in \bar{H}$. The purpose of this paper is to introduce rather general sufficient condition of Landesman-Lazer type for the existence of a solution of \eqref{11}:

\vspace{3mm}
\textit{If $(u_{m}) \subset H$ is a sequence such that $\| u_{n} \|_{2} \rightarrow \infty$ and
there exists $\phi_{0} \in \bar{H}$, $\frac{u_{n}}{\| u_{n} \|_{2}} \rightarrow \phi_{0}$ in $L^{2}(\Omega)$, then} 
\begin{equation}\label{SC}\tag*{$(\rm SC)_{\pm}$}
\lim\limits_{n \rightarrow \infty} \left( \int\limits_{\Omega} G(u_{n}) \, {\rm d} x - \int\limits_{\Omega} \bar{f} u_{n} \, {\rm d} x \right) = \pm \infty.
\end{equation}
Here, $G(s) = \int\limits_{0}^{s} g(\tau)\, {\rm d} \tau$ is the antiderivative of $g$.
\begin{Th} \label{T11}
Assume that either $(\rm SC)_{+}$ or else $(\rm SC)_{-}$ holds. Then the problem \eqref{11} has at least one solution.
\end{Th}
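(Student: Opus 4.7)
The plan is to cast \eqref{11} as a critical point problem for
\[
J(u)=\tfrac{1}{2}\|u\|^{2}-\tfrac{\lambda_{k}}{2}\|u\|_{2}^{2}+\int_{\Omega}G(u)\,dx-\int_{\Omega}fu\,dx,
\]
which belongs to $C^{1}(H,\mathbb{R})$ because $g$ is bounded and continuous, and whose critical points are exactly the weak solutions of \eqref{11}. I would apply the saddle point theorem of Rabinowitz with the splitting $V=\hat H$, $W=\bar H\oplus\tilde H$ under hypothesis $(\rm SC)_{+}$, and the dual splitting $V=\hat H\oplus\bar H$, $W=\tilde H$ under $(\rm SC)_{-}$.

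The saddle geometry rests on the spectral gaps $\|\hat u\|^{2}-\lambda_{k}\|\hat u\|_{2}^{2}\le-c_{1}\|\hat u\|^{2}$ and $\|\tilde u\|^{2}-\lambda_{k}\|\tilde u\|_{2}^{2}\ge c_{2}\|\tilde u\|^{2}$ (with $c_{1},c_{2}>0$ strict because $\lambda_{k-1}<\lambda_{k}<\lambda_{k+m}$), together with the linear estimate $|\int G(u)|+|\int fu|\le C\|u\|_{2}$ coming from $|g|\le M$ and $f\in L^{2}$. On $\hat H$ alone the gap forces $J\to-\infty$, and on $\tilde H$ alone it makes $J$ coercive from below. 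The nontrivial contribution is from the resonant slice. In the case $(\rm SC)_{-}$, applying the hypothesis to sequences $\bar u_{n}\in\bar H$ with $\|\bar u_{n}\|_{2}\to\infty$ (the unit sphere of the finite-dimensional $\bar H$ supplies the limit $\phi_{0}$) yields $\int G(\bar u_{n})-\int\bar f\bar u_{n}\to-\infty$, which combines with the gap on $\hat H$ to drive $J\to-\infty$ on all of $V=\hat H\oplus\bar H$. In the case $(\rm SC)_{+}$, the cross-term is handled by the telescoping $G(\bar u+\tilde u)=G(\bar u)+[G(\bar u+\tilde u)-G(\bar u)]$ whose remainder is $O(\|\tilde u\|_{2})$; absorbing it into the coercive quadratic on $\tilde H$ yields
\[
J(\bar u+\tilde u)\ge c_{2}\|\tilde u\|^{2}-C\|\tilde u\|+\Bigl(\int_{\Omega}G(\bar u)\,dx-\int_{\Omega}\bar f\bar u\,dx\Bigr)-C,
\]
and $(\rm SC)_{+}$ makes the bracketed term bounded below on $\bar H$, so $J|_{W}\ge\beta$ for some $\beta\in\mathbb{R}$.

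The main obstacle is the Palais--Smale condition, since the resonant direction $\bar H$ carries no quadratic coercivity. Given a sequence $(u_{n})$ with $J(u_{n})$ bounded and $J'(u_{n})\to 0$, I first test $J'(u_{n})$ separately against $\hat u_{n}$ and $\tilde u_{n}$: orthogonality, the spectral gaps, and $|g|\le M$ combine to give $c\|v_{n}\|^{2}\le C\|v_{n}\|+o(\|v_{n}\|)$ for $v_{n}\in\{\hat u_{n},\tilde u_{n}\}$, so both $\|\hat u_{n}\|$ and $\|\tilde u_{n}\|$ are bounded. If $\|\bar u_{n}\|\to\infty$ along a subsequence, then $\|u_{n}\|_{2}\to\infty$ and, extracting a further subsequence, $u_{n}/\|u_{n}\|_{2}\to\phi_{0}$ in $L^{2}(\Omega)$ for some $\phi_{0}\in\bar H$ with $\|\phi_{0}\|_{2}=1$. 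Hypothesis $(\rm SC)_{\pm}$ then yields $\int G(u_{n})-\int\bar fu_{n}\to\pm\infty$, whereas inserting $\int fu_{n}=\int\bar f\bar u_{n}+\int f^{\bot}(\hat u_{n}+\tilde u_{n})$ into the formula for $J(u_{n})$ and using that every quadratic term and every $f^{\bot}$ term remains bounded shows that $\int G(u_{n})-\int\bar fu_{n}$ must stay bounded --- a contradiction. Once $(u_{n})$ is bounded in $H$, strong convergence of a subsequence follows in the usual way from the compact embedding $H\hookrightarrow L^{2}(\Omega)$ and the continuity of $g$ together with invertibility of $-\Delta$ on $H$. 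Combining Palais--Smale with the saddle geometry, the saddle point theorem delivers a critical point of $J$, i.e.\ a weak solution of \eqref{11}.
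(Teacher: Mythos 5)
Your proposal is correct and follows essentially the same route as the paper: the same energy functional, the Rabinowitz saddle point theorem with the same two splittings ($\hat H$ versus $\bar H\oplus\tilde H$ under $(\mathrm{SC})_{+}$, $\hat H\oplus\bar H$ versus $\tilde H$ under $(\mathrm{SC})_{-}$), and the same Palais--Smale argument (bounding $\|\hat u_n\|$ and $\|\tilde u_n\|$ by testing the gradient against them, then contradicting the boundedness of $J(u_n)$ via $(\mathrm{SC})_{\pm}$ once $u_n/\|u_n\|_{2}\to\phi_0\in\bar H$). The only difference is cosmetic: you verify the saddle geometry directly by telescoping $G(\bar u+\tilde u)=G(\bar u)+O(\|\tilde u\|_{2})$ and using that $(\mathrm{SC})_{\pm}$ restricted to the finite-dimensional $\bar H$ makes $\int_{\Omega}G(\bar u)\,\mathrm{d}x-\int_{\Omega}\bar f\bar u\,\mathrm{d}x$ coercive (resp.\ anticoercive), whereas the paper argues by contradiction with normalized sequences; both are sound.
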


\begin{Rem}
{ \rm Note that the sufficient condition which is similar to $(\rm SC)_{+}$ but more restrictive than $(\rm SC)_{+}$
was introduced recently in \cite{DR} where the resonance problem with respect to the Fu\v{c}\'{i}k
spectrum of the Laplacian
was studied. In this paper, we benefit from the fact that the resonance occurs at the eigenvalue
which allows us to split the underlying function space $H$ into the sum of orthogonal subspaces.
In contrast with \cite{DR}, where such splitting is impossible, we can get rid of the $f^{\bot}$-part of the right-hand
side $f$ in \ref{SC}. This makes our conditions more general and geometrically more transparent.}
\end{Rem}

In order to interpret our conditions \ref{SC} in historical context, we first consider a bounded continuous nonlinear
function $g: \mathbb{R} \rightarrow \mathbb{R}$ with finite limits $g(\pm \infty) := \lim\limits_{s \rightarrow \pm \infty} g(s)$.
Let us assume that
\begin{equation}\label{LL}\tag*{$(\rm LL)_{\pm}$}
\begin{aligned}
g(\mp \infty) \int\limits_{\Omega} \phi^{+} \, {\rm d} x - g(\pm \infty) \int\limits_{\Omega} \phi^{-} \, {\rm d} x
& < \int\limits_{\Omega} \bar{f} \phi \, {\rm d} x \\
& < g(\pm \infty) \int\limits_{\Omega} \phi^{+} \, {\rm d} x - g(\mp \infty) \int\limits_{\Omega} \phi^{-} \, {\rm d} x
\end{aligned}
\end{equation}
holds for all eigenfunctions $\phi$ associated with $\lambda_{k}$. This is the classical Landesman-Lazer condition (see \cite{LL}). Assume $\| u_{n}\|_{2} \rightarrow \infty$ and
$\frac{u_{n}}{\| u_{n} \|_{2}} \rightarrow \phi_{0}$ for some eigenfunction $\phi_{0}$. Then by l'Hospital's rule we have
\begin{equation*}
\begin{aligned}
\lim\limits_{n \rightarrow \infty} \frac{1}{\| u_{n}\|_{2}} \left(\int\limits_{\Omega} G(u_{n}) \, {\rm d}x - \int\limits_{\Omega} \bar{f} u_{n} \, {\rm d} x \right)
 = \lim\limits_{n \rightarrow \infty} & \int\limits_{\Omega} \left( \frac{G(u_{n})}{u_{n}} - \bar{f}\right)
\frac{u_{n}}{\| u_{n}\|_{2}} \, {\rm d} x \\
 = \int\limits_{\Omega} \left( g(+ \infty) + \bar{f} \right) \phi^{+}_{0} & \, {\rm d} x
- \int\limits_{\Omega} \left( g(- \infty) + \bar{f} \right) \phi^{-}_{0} \, {\rm d} x.
\end{aligned}
\end{equation*}
The last expression is either positive or negative due to \ref{LL} and hence \ref{SC} hold.
In other words we proved that \ref{LL} imply \ref{SC}.

Assume, moreover, $g(- \infty) < 0 < g(+ \infty)$ (think, for example, about $g(s) = \arctan s$).
Then problem \eqref{11} has a solution for all $f$ which belong to the "strip" around the linear
subspace $L^{2}(\Omega)^{\bot} := \left \{ f \in L^{2}(\Omega): \int\limits_{\Omega} f \phi \, {\rm d} x = 0
\; {\rm for \; all} \; \phi \in \bar{H} \right \}$ of $L^{2}(\Omega)$.

We note that the conditions \ref{LL} are empty if $g(- \infty) = g(+ \infty)$. On the other hand, it follows
from Theorem \ref{T11} that the problem \eqref{11} with $g(s) = \frac{{\rm sgn} s}{(e+ |s|) \ln (e+ |s|)}$ ($e$ is Euler's number) has at least one
solution for $f \in L^{2}(\Omega)^{\bot}$. Indeed,
$\lim\limits_{|s| \rightarrow \infty} G(s) = \lim\limits_{|s| \rightarrow \infty} \ln \left( \ln (e+ |s|)\right) = \infty$
implies that $(\rm SC)_{+}$ holds true. Hence \ref{SC} cover the case of \textit{vanishing} nonlinearities
$g(\pm \infty) = 0$ (see \cite{D}). However, it should be emphasized, that in contrast with previous
works on vanishing nonlinearities our approach does not require any kind of symmetry or sign condition about
$g$ (cf. \cite{D3, D4, FN, Ca, Gu, INW}). At the same time, it generalizes the 
results from \cite{FK, He}.

We also note that verification of \ref{SC} does not require the existence of limits $g(\pm \infty)$
at all. As an example we consider $g(s) = \arctan s + c\cdot \cos s$ with an arbitrary constant $c \in \mathbb{R}$.
An easy calculation  yields that \eqref{11} has at least one solution for any $f \in L^{2}(\Omega)$ satisfying
\begin{equation}\label{14}
\left |\int\limits_{\Omega} f \phi \, {\rm d} x \right| < \frac{\pi}{2} \int\limits_{\Omega}| \phi | \, {\rm d} x
\end{equation}
for any $\phi \in \bar{H}$. On the other hand, the conditions \ref{LL} and various generalizations (see, e.g.
\cite{D1, D2}) do not apply in this case if $|c| \geq \frac{\pi}{2}$.

Above mentioned case $g(s) = \arctan s + c\cdot \cos s$ is covered by the so called potential
Landesman-Lazer condition:
\begin{equation}\label{PLL}\tag*{$(\rm PLL)_{\pm}$}
G^{\mp}\int\limits_{\Omega} \phi^{+} \, {\rm d} x - G^{\pm}\int\limits_{\Omega} \phi^{-} \, {\rm d} x <
\int\limits_{\Omega} \bar{f}\phi  \, {\rm d} x <
G^{\pm}\int\limits_{\Omega} \phi^{+} \, {\rm d} x - G^{\mp}\int\limits_{\Omega} \phi^{-} \, {\rm d} x
\end{equation}
where $G^{\pm} := \lim\limits_{s \rightarrow \pm \infty} \frac{G(s)}{s}$. Indeed, 
l'Hospital's rule implies $G^{-} = - \frac{\pi}{2}$, $G^{+} = \frac{\pi}{2}$ and the condition $(\rm PLL)_{+}$
reduces to \eqref{14}. For the use of \ref{PLL} see, e.g. the papers
\cite{BBR, T, T1, T2, T3, CLT}.

The conditions \ref{PLL} eliminate the influence of the bounded \textit{oscillating} term $c\cdot \cos s$
which disappears "in an average" as $|s| \rightarrow \infty$.

However, the conditions \ref{PLL} do not cover the case $g(s) = \frac{s}{1+ s^{2}} + c\cdot \cos s$,
where $c \in \mathbb{R}$ is an arbitrary constant. Indeed, both conditions
are empty, due to the fact $G^{\pm} = 0$. On the other hand, 
it follows from Theorem \ref{T11} that \eqref{11}
with $g$ given above
has a solution for any $f \in L^{2}(\Omega)^{\bot}$. This fact illustrates that our conditions
\ref{SC} refine also the conditions \ref{PLL} and, at the same time, they complement the results from \cite{NR1} and \cite{NR2}.

\begin{Ex}
The boundary value problem 
\begin{equation}\label{E}
\begin{aligned}
- \Delta u - \lambda_{k} u + \frac{u}{(e + u^{2}) \ln(e + u^{2})^{1/2}} 
+ c\cdot \cos u& = f \quad  {\text {in}} \; \Omega ,  \\
u & = 0 \quad  {\rm on} \; \partial\Omega,
\end{aligned}
\end{equation}
has a solution for arbitrary $c \in \mathbb{R}$ and for any $f \in L^{2}(\Omega)$ satisfying 
$$\int\limits_{\Omega} f \phi \, {\rm d} x = 0 $$
for any $\phi \in \bar{H}$. Indeed, since
$$\lim\limits_{|s| \rightarrow \infty} G(s) = \lim\limits_{|s| \rightarrow \infty} \left[ \ln \left( \ln (e+ s^{2})^{1/2}\right) + c \cdot \sin s \right] = \infty,$$
the result follows from Theorem \ref{T11}. On the other hand, the existence result for problems of type \eqref{E} does not follow from any work published in the literature so far.
\end{Ex}

\section{Preliminaries}

In this section we stress some helpfull facts used in the proof of Theorem \ref{T11}.
\begin{Lm} \label{L21}
There exist $c_{1} >0$, $c_{2}>0$ such that for any $u \in H$ we have
\begin{equation}\label{21}
\int\limits_{\Omega} |\nabla \hat{u}|^{2} \, {\rm d}x- \lambda_{k}
\int\limits_{\Omega} (\hat{u} )^{2} \, {\rm d} x \leq - c_{1} \| \hat{u}\|^{2}
\end{equation}
and
\begin{equation}\label{22}
\left |\int\limits_{\Omega} g(u) \hat{u} \, {\rm d}x-
\int\limits_{\Omega} f \hat{u} \, {\rm d} x \right| \leq c_{2} \| \hat{u}\|.
\end{equation}
\end{Lm}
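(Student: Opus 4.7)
The plan is to exploit two structural features: that $\hat H$ is a finite-dimensional subspace on which the quadratic form $u \mapsto \|u\|^2 - \lambda_k \|u\|_2^2$ is strictly negative definite (with a quantitative gap), and that $g$ is bounded while $f \in L^2(\Omega)$. Both inequalities are essentially spectral computations after expanding in the orthogonal eigenbasis.

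For \eqref{21}, I would expand $\hat u = \sum_{i=1}^{k-1} a_i \phi_i$. Since $-\Delta \phi_i = \lambda_i \phi_i$ and $\|\phi_i\|_2 = 1$, the eigenbasis is orthogonal both in $L^2(\Omega)$ and under $(\cdot,\cdot)$, so
\[
\|\hat u\|^2 = \sum_{i=1}^{k-1} \lambda_i a_i^2, \qquad \|\hat u\|_2^2 = \sum_{i=1}^{k-1} a_i^2.
\]
Hence
\[
\int_\Omega |\nabla \hat u|^2\,{\rm d}x - \lambda_k \int_\Omega \hat u^2\,{\rm d}x \;=\; \sum_{i=1}^{k-1}(\lambda_i - \lambda_k) a_i^2 \;\leq\; -(\lambda_k - \lambda_{k-1})\sum_{i=1}^{k-1} a_i^2.
\]
Combining with the trivial upper bound $\sum a_i^2 \geq \|\hat u\|^2/\lambda_{k-1}$ produces \eqref{21} with $c_1 := (\lambda_k - \lambda_{k-1})/\lambda_{k-1} > 0$; the strict spectral gap $\lambda_{k-1} < \lambda_k$ is exactly what makes $c_1$ positive.

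For \eqref{22}, I would simply combine the Cauchy--Schwarz and Hölder inequalities with the boundedness of $g$:
\[
\left| \int_\Omega g(u)\hat u\,{\rm d}x - \int_\Omega f \hat u\,{\rm d}x \right| \;\leq\; \bigl( \|g\|_\infty |\Omega|^{1/2} + \|f\|_2 \bigr) \|\hat u\|_2,
\]
and then pass from the $L^2$ norm to the $H$ norm via the same eigenvalue expansion: $\|\hat u\|_2^2 = \sum a_i^2 \leq \lambda_1^{-1} \sum \lambda_i a_i^2 = \lambda_1^{-1}\|\hat u\|^2$. This yields \eqref{22} with $c_2 := (\|g\|_\infty |\Omega|^{1/2} + \|f\|_2)/\sqrt{\lambda_1}$.

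I do not expect any genuine obstacle: the only thing one must be careful about is making sure $c_1, c_2$ are independent of $u$, which is automatic because both constants depend only on $g$, $f$, $|\Omega|$ and the spectrum of $-\Delta$. Note also that the degenerate case $k=1$ (where $\hat H = \{0\}$) is covered vacuously since $\hat u \equiv 0$ makes both sides of \eqref{21} and \eqref{22} equal to zero.
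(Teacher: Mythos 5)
Your proposal is correct and follows essentially the same route as the paper: the paper's one-line proof invokes the variational characterization of $\lambda_{k}$ for \eqref{21} and H\"older's inequality together with the boundedness of $g$ and $f\in L^{2}(\Omega)$ for \eqref{22}, and your eigenbasis expansion is just the explicit form of those facts, with valid constants $c_{1}=(\lambda_{k}-\lambda_{k-1})/\lambda_{k-1}$ and $c_{2}=(\|g\|_{\infty}|\Omega|^{1/2}+\|f\|_{2})/\sqrt{\lambda_{1}}$. (Only a cosmetic slip: you call $\sum a_{i}^{2}\geq \|\hat{u}\|^{2}/\lambda_{k-1}$ an ``upper bound''; it is of course a lower bound, used correctly.)
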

\begin{proof}
The inequality \eqref{21} follows from the variational characterization of $\lambda_{k}$, \eqref{22} follows from the H\H{o}lder
inequality, the boundedness of $g$ and the fact $f \in L^{2}(\Omega)$.
\end{proof}
\begin{Lm} \label{L22}
There exist $c_{3} >0$, $c_{4}>0$ such that for any $u \in H$ we have
\begin{equation}\label{23}
\int\limits_{\Omega} |\nabla \tilde{u}|^{2} \, {\rm d}x- \lambda_{k}
\int\limits_{\Omega} (\tilde{u} )^{2} \, {\rm d} x \geq c_{3} \| \tilde{u}\|^{2}
\end{equation}
and
\begin{equation}\label{24}
\left |\int\limits_{\Omega} g(u) \tilde{u} \, {\rm d}x-
\int\limits_{\Omega} f \tilde{u} \, {\rm d} x \right| \leq c_{4} \| \tilde{u}\|.
\end{equation}
\end{Lm}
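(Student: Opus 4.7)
The plan is to prove both inequalities by mimicking the proof of Lemma \ref{L21}, using the eigenfunction expansion of $\tilde u$ and standard Sobolev/H\"older estimates. The key structural fact is that $\tilde H$ is the direct sum of eigenspaces corresponding to eigenvalues $\lambda_j \geq \lambda_{k+m} > \lambda_k$, so the quadratic form on the left-hand side of \eqref{23} is coercive on $\tilde H$.

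For \eqref{23}, I would expand $\tilde u = \sum_{j \geq k+m} a_j \phi_j$ in the $L^{2}$-orthonormal basis of eigenfunctions. Because $-\Delta \phi_j = \lambda_j \phi_j$ with $\|\phi_j\|_2 = 1$, one obtains
\[
\int_\Omega |\nabla \tilde u|^{2}\,\mathrm d x = \sum_{j \geq k+m} \lambda_j a_j^{2}, \qquad \int_\Omega \tilde u^{2}\,\mathrm d x = \sum_{j \geq k+m} a_j^{2}.
\]
Hence
\[
\int_\Omega |\nabla \tilde u|^{2}\,\mathrm d x - \lambda_k \int_\Omega \tilde u^{2}\,\mathrm d x
= \sum_{j \geq k+m} (\lambda_j - \lambda_k)\, a_j^{2}
\geq \Bigl(1 - \tfrac{\lambda_k}{\lambda_{k+m}}\Bigr) \sum_{j \geq k+m} \lambda_j a_j^{2}
= c_3 \|\tilde u\|^{2},
\]
with $c_3 := 1 - \lambda_k/\lambda_{k+m} > 0$ since $\lambda_{k+m} > \lambda_k$. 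Equivalently, one can invoke the variational (Courant--Fischer) characterization of $\lambda_{k+m}$ on the orthogonal complement $\tilde H$.

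For \eqref{24}, I apply H\"older's inequality together with the boundedness of $g$ and the assumption $f \in L^{2}(\Omega)$:
\[
\left| \int_\Omega g(u)\tilde u\,\mathrm d x - \int_\Omega f \tilde u\,\mathrm d x \right|
\leq \bigl( \|g\|_{\infty} |\Omega|^{1/2} + \|f\|_{2} \bigr) \|\tilde u\|_{2},
\]
and then close via the Poincar\'e inequality $\|\tilde u\|_2 \leq \lambda_1^{-1/2} \|\tilde u\|$, producing the required constant $c_4$.

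I do not anticipate any serious obstacle here; the lemma is the spectral analogue of Lemma \ref{L21} on the complementary invariant subspace, and the only item that needs to be made explicit is the strict gap $\lambda_{k+m} > \lambda_k$ that supplies the positive constant $c_3$. The estimate \eqref{24} is essentially identical to \eqref{22}, with $\hat u$ replaced by $\tilde u$.
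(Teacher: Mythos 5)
Your proof is correct and follows essentially the same route as the paper, which simply invokes the variational characterization of the eigenvalues for \eqref{23} and H\"older's inequality with the boundedness of $g$ and $f \in L^{2}(\Omega)$ for \eqref{24}; you have merely made the spectral-gap computation $c_{3} = 1 - \lambda_{k}/\lambda_{k+m} > 0$ explicit via the eigenfunction expansion. No gaps.
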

\begin{proof}
The inequality \eqref{23} is also a consequence of the variational characterization of $\lambda_{k}$, and \eqref{24} follows
similarly as \eqref{22}.
\end{proof}
\begin{Lm} \label{L23}
There exist $c_{5} >0$ such that for any $u \in H$ we have
\begin{equation}\label{25}
\left |\int\limits_{\Omega} G(u) \, {\rm d}x-
\int\limits_{\Omega} f u \, {\rm d} x \right| \leq c_{5} \| u\|_{2}.
\end{equation}
\end{Lm}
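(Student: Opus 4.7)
The plan is to exploit two facts that are immediate from the standing hypotheses: $g$ is bounded on $\mathbb{R}$ and $\Omega$ has finite Lebesgue measure. Set $M := \sup_{\tau \in \mathbb{R}} |g(\tau)|$, which is finite by assumption. Since $G(s) = \int_0^s g(\tau)\,\mathrm{d}\tau$, integrating the bound $|g| \le M$ gives $|G(s)| \le M |s|$ for every $s \in \mathbb{R}$. This is the one structural fact needed; everything after it is H\"older.

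First I would estimate the potential term. Pointwise $|G(u(x))| \le M |u(x)|$, so applying the Cauchy--Schwarz inequality (or H\"older with exponents $2,2$) against the constant function $1$ on the bounded domain $\Omega$ yields
\begin{equation*}
\left| \int_\Omega G(u)\,\mathrm{d}x \right| \le M \int_\Omega |u|\,\mathrm{d}x \le M\, |\Omega|^{1/2}\, \| u \|_2.
\end{equation*}
Next, since $f \in L^2(\Omega)$, Cauchy--Schwarz directly gives
\begin{equation*}
\left| \int_\Omega f u \,\mathrm{d}x \right| \le \| f \|_2\, \| u \|_2.
\end{equation*}
Combining the two bounds by the triangle inequality and setting $c_5 := M |\Omega|^{1/2} + \| f \|_2$ produces the desired estimate \eqref{25} for every $u \in H$.

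There is essentially no obstacle here; the constant $c_5$ depends only on the data $g$, $f$, and $\Omega$, not on $u$, because $M$, $|\Omega|$, and $\|f\|_2$ are all finite. The lemma is in fact simpler than \eqref{22} and \eqref{24}, since no splitting of $u$ is needed and no variational characterization of $\lambda_k$ enters: the control is purely by the $L^2$-norm of $u$ rather than by the $H$-norm of a projection.
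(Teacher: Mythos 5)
Your proof is correct and follows exactly the route the paper indicates: the bound $|G(s)|\le M|s|$ from the boundedness of $g$, then the H\"older (Cauchy--Schwarz) inequality on the bounded domain for both terms. The paper only sketches this in one line, and your write-up supplies the same argument in full detail with the explicit constant $c_5 = M|\Omega|^{1/2}+\|f\|_2$.
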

\begin{proof}
The inequality \eqref{25} follows from the H\H{o}lder
inequality, the boundedness of $g$ and the fact $f \in L^{2}(\Omega)$.
\end{proof}

\section{Proof of Theorem 1}

We define the \textit{energy functional} associated with \eqref{11}, $\mathcal{E}: H \rightarrow \mathbb{R}$, by
$$
\mathcal{E} (u) := \frac{1}{2} \int\limits_{\Omega} |\nabla u|^{2} \, {\rm d}x- \frac{\lambda_{k}}{2}
\int\limits_{\Omega} (u)^{2} \, {\rm d} x + \int\limits_{\Omega} G(u) \, {\rm d} x -
\int\limits_{\Omega} f u \, {\rm d} x,
$$
$u \in H$. Obviously, all critical points of $\mathcal{E}$ satisfy \eqref{13} and vice versa.

We will apply Saddle Point Theorem due to P. Rabinowitz \cite{R}:
\begin{Th} \label{T31}
Let $\mathcal{E} \in C^{1}(H, \mathbb{R})$ and $H = H^{-} \oplus H^{+}$, $\dim H^{-} < \infty$,
$\dim H^{+} = \infty$. Assume that
\begin{itemize}
  \item[\rm (a)] There exist a bounded neighborhood $D$ of $0$ in $H^{-}$ and a constant $\alpha \in \mathbb{R}$
  such that $\mathcal{E}\Bigr|_{\partial D} \leq \alpha$.
  \item[\rm (b)] There exists a constant $\beta > \alpha$ such that $\mathcal{E}\Bigr|_{H^{+}} \geq \beta$.
  \item[\rm (c)] $\mathcal{E}$ satisfies $(PS)$ condition.
\end{itemize}
Then the functional $\mathcal{E}$ has a critical point in $H$.
\end{Th}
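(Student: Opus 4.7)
\textbf{Proof plan for Theorem \ref{T11}.} The strategy is to apply the Saddle Point Theorem (Theorem \ref{T31}) to the functional $\mathcal{E}$; its $C^{1}$-regularity and the equivalence of its critical points with weak solutions of \eqref{11} are routine consequences of boundedness of $g$. The main design choice is the splitting $H = H^{-} \oplus H^{+}$, chosen according to the sign in \ref{SC}: under $(\mathrm{SC})_{+}$ set $H^{-} = \hat{H}$ and $H^{+} = \bar{H} \oplus \tilde{H}$, while under $(\mathrm{SC})_{-}$ set $H^{-} = \hat{H} \oplus \bar{H}$ and $H^{+} = \tilde{H}$. In both cases $\dim H^{-} < \infty$ and $\dim H^{+} = \infty$, as required by Theorem \ref{T31}.

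The verification of (a) and (b) rests on the block decomposition of the quadratic form $\int_{\Omega}|\nabla u|^{2} - \lambda_{k}\int_{\Omega} u^{2}$, whose $\bar{H}$-block vanishes, together with the identity $\int_{\Omega} f u = \int_{\Omega} \bar{f}\bar{u} + \int_{\Omega} f^{\perp}(\hat{u} + \tilde{u})$, which follows from $\bar{f} \in \bar{H}$ and $f^{\perp} \perp \bar{H}$ in $L^{2}$. The easy half is immediate: on $\hat{H}$ the quadratic term $\leq -c_{1}\|\hat{u}\|^{2}$ from Lemma \ref{L21} dominates the at-most-linear control of Lemma \ref{L23}, driving $\mathcal{E}(\hat{u}) \to -\infty$; symmetrically, on $\tilde{H}$ the bound $\geq c_{3}\|\tilde{u}\|^{2}$ of Lemma \ref{L22} forces $\mathcal{E}(\tilde{u})$ to be bounded below. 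The hard half is to show, under $(\mathrm{SC})_{+}$, that $\mathcal{E}$ is bounded below on $\bar{H} \oplus \tilde{H}$ (and the dual statement under $(\mathrm{SC})_{-}$). I would argue by contradiction: assuming $u_{n} \in \bar{H} \oplus \tilde{H}$ with $\mathcal{E}(u_{n}) \to -\infty$, Lemmas \ref{L22} and \ref{L23} force $\|u_{n}\|_{2} \to \infty$. A dichotomy on the ratio $\|\tilde{u}_{n}\|_{2}/\|u_{n}\|_{2}$ then finishes the argument: if this ratio tends to $0$, finite-dimensionality of $\bar{H}$ yields $u_{n}/\|u_{n}\|_{2} \to \phi_{0} \in \bar{H}$ in $L^{2}$ along a subsequence, and $(\mathrm{SC})_{+}$ forces $\int_{\Omega} G(u_{n}) - \int_{\Omega}\bar{f}u_{n} \to +\infty$, a contradiction; otherwise $\|\tilde{u}_{n}\|^{2} \geq c\|u_{n}\|_{2}^{2}$ along a subsequence, whence the $\tilde{H}$-quadratic term overwhelms the linear-in-$\|u\|_{2}$ control of Lemma \ref{L23} and drives $\mathcal{E}(u_{n}) \to +\infty$ instead. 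The case $(\mathrm{SC})_{-}$ on $\hat{H} \oplus \bar{H}$ is dual.

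For the Palais--Smale condition, take $u_{n}$ with $|\mathcal{E}(u_{n})| \leq M$ and $\mathcal{E}'(u_{n}) \to 0$ in $H^{*}$. Testing $\mathcal{E}'(u_{n})$ against $\hat{u}_{n}$ and against $\tilde{u}_{n}$ and invoking Lemmas \ref{L21} and \ref{L22} gives bounds $\|\hat{u}_{n}\|, \|\tilde{u}_{n}\| \leq C$. Boundedness of $\|\bar{u}_{n}\|$ then follows by the same dichotomy: were $\|\bar{u}_{n}\| \to \infty$ one would have $\|u_{n}\|_{2} \to \infty$ with $u_{n}/\|u_{n}\|_{2} \to \phi_{0} \in \bar{H}$ in $L^{2}$, and \ref{SC} would force $|\mathcal{E}(u_{n})| \to \infty$, contradicting boundedness. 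Once $\{u_{n}\}$ is bounded in $H$, strong convergence along a subsequence is the standard argument: $-\Delta - \lambda_{k}$ restricted to $\hat{H} \oplus \tilde{H}$ is invertible, the Nemytskii operator $u \mapsto g(u) - f$ is compact from $H$ to $H^{*}$ via the compact embedding $H \hookrightarrow L^{2}$ and boundedness of $g$, and $\bar{u}_{n}$ lies in a finite-dimensional space. Finally, once (a)--(c) are in hand, one chooses $D$ a sufficiently large ball in $H^{-}$ so that $\alpha := \sup_{\partial D} \mathcal{E} < \beta := \inf_{H^{+}} \mathcal{E}$, and Theorem \ref{T31} delivers the desired critical point.

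The delicate step throughout is the dichotomy argument: to use \ref{SC} one needs the unit vectors $u_{n}/\|u_{n}\|_{2}$ to approach $\bar{H}$ in $L^{2}$, and the case analysis must show that when they fail to, the quadratic form on the complementary block is already strong enough to preclude the unwanted behaviour on its own.
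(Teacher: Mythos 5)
You have proved the wrong statement. The statement in question is Theorem \ref{T31} itself --- Rabinowitz's Saddle Point Theorem --- whereas your proposal is a proof plan for Theorem \ref{T11} that invokes Theorem \ref{T31} as a black box. Read against its target, the proposal is circular: at no point do you explain why hypotheses (a)--(c) produce a critical point; you simply assume they do and spend all your effort verifying them for the functional $\mathcal{E}$ of problem \eqref{11}. (The paper itself does not prove Theorem \ref{T31} either --- it quotes it from \cite{R} --- and the whole of its Section 3 is precisely the verification of (a)--(c) that you reconstructed.) A genuine proof of Theorem \ref{T31} needs the minimax machinery: set $\Gamma := \{ h \in C(\bar{D}, H) : h|_{\partial D} = \mathrm{id} \}$ and $c := \inf_{h \in \Gamma} \max_{u \in \bar{D}} \mathcal{E}(h(u))$; prove the linking property that $h(\bar{D}) \cap H^{+} \neq \emptyset$ for every $h \in \Gamma$, by applying Brouwer degree to $P^{-} \circ h$ on the finite-dimensional set $D$ (where $P^{-}$ is the projection onto $H^{-}$): since $P^{-} \circ h = \mathrm{id}$ on $\partial D$ and $0 \in D$, $\deg(P^{-} \circ h, D, 0) = 1$, so some $u \in D$ has $h(u) \in H^{+}$, whence $c \geq \beta > \alpha$; finally, use the deformation lemma --- this is where the $(PS)$ condition of (c) enters --- to conclude that $c$ is a critical value: if it were regular, one could deform $\mathcal{E}^{c+\varepsilon}$ into $\mathcal{E}^{c-\varepsilon}$ keeping $\partial D$ fixed (legitimate because $\mathcal{E}|_{\partial D} \leq \alpha < c - \varepsilon$), and composing a near-optimal $h$ with this deformation would stay in $\Gamma$ and contradict the definition of $c$. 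None of these three ingredients --- the minimax class, the degree-theoretic intersection argument, the deformation step --- appears in your text.

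For what it is worth, read as a proof of Theorem \ref{T11} your plan is sound and essentially the paper's: same sign-dependent splitting ($H^{-} = \hat{H}$ under $(\mathrm{SC})_{+}$, $H^{-} = \hat{H} \oplus \bar{H}$ under $(\mathrm{SC})_{-}$), same use of Lemmas \ref{L21}--\ref{L23}, same contradiction structure in both the lower bound on $H^{+}$ and the $L^{2}$-boundedness step of $(PS)$. Your one deviation is technical: where the paper normalizes $v_{n} = u_{n}/\|u_{n}\|_{2}$, extracts a weak limit, and identifies it as an eigenfunction via the quadratic form (using Lemma \ref{L22}, respectively Lemma \ref{L21}), you run a dichotomy on $\|\tilde{u}_{n}\|_{2}/\|u_{n}\|_{2}$ and let finite-dimensionality of $\bar{H}$ deliver $u_{n}/\|u_{n}\|_{2} \to \phi_{0}$ directly; both routes work, and both reach the point where \ref{SC} applies. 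But this does not repair the central defect: as a proof of the assigned statement, the proposal contains no proof at all.
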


At first we verify the Palais-Smale condition.
\begin{Lm} \label{L32}
Let us assume \ref{SC}. Then $\mathcal{E}$ satisfies $(PS)$ condition, i.e., if $(\mathcal{E}(u_{n})) \subset H$
is a bounded sequence and $\nabla \mathcal{E}(u_{n}) \rightarrow v$ in $H$, then there exist a subsequence
$(u_{n_{k}}) \subset (u_{n})$ and an element $u \in H$ such that $u_{n_{k}} \rightarrow u$ in $H$.
\end{Lm}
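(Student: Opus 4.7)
The plan is to prove $(PS)$ by first showing that any sequence $(u_n) \subset H$ with $\mathcal{E}(u_n)$ bounded and $\nabla\mathcal{E}(u_n) \to v$ in $H$ is bounded in $H$, and then extracting a strongly convergent subsequence via the compact embedding $H \hookrightarrow L^{2}(\Omega)$. Condition \ref{SC} enters only in the boundedness step; the rest is the standard semilinear compactness argument.

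For boundedness, I split $u_n = \hat u_n + \bar u_n + \tilde u_n$. Since the three components are mutually $H$- and $L^{2}$-orthogonal,
\begin{equation*}
(\nabla\mathcal{E}(u_n), \hat u_n) = \int_\Omega |\nabla\hat u_n|^{2}\,{\rm d}x - \lambda_k\int_\Omega \hat u_n^{2}\,{\rm d}x + \int_\Omega g(u_n)\hat u_n\,{\rm d}x - \int_\Omega f\hat u_n\,{\rm d}x.
\end{equation*}
Lemma \ref{L21} bounds the right-hand side above by $-c_1\|\hat u_n\|^{2} + c_2\|\hat u_n\|$, while the left-hand side is bounded in absolute value by a multiple of $\|\hat u_n\|$, since $(\nabla\mathcal{E}(u_n))$ is bounded; together these yield a uniform bound on $\|\hat u_n\|$. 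Testing against $\tilde u_n$ and applying Lemma \ref{L22} analogously bounds $\|\tilde u_n\|$.

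Now suppose, for contradiction, that $\|\bar u_n\|\to\infty$ along a subsequence. Since the quadratic form $u\mapsto \|u\|^{2}-\lambda_k\|u\|_{2}^{2}$ vanishes on $\bar H$, and using the orthogonality relations $\int_\Omega \bar f\,\hat u_n=\int_\Omega \bar f\,\tilde u_n=0$ and $\int_\Omega f^{\bot}\bar u_n=0$, the energy decomposes as
\begin{equation*}
\mathcal{E}(u_n) = \left(\int_\Omega G(u_n)\,{\rm d}x - \int_\Omega \bar f\, u_n\,{\rm d}x\right) + \mathcal{R}_n,
\end{equation*}
where $\mathcal{R}_n$ collects the bounded quadratic contributions of $\hat u_n,\tilde u_n$ and the bounded linear terms $\int f^{\bot}(\hat u_n+\tilde u_n)$, hence $|\mathcal{R}_n|=O(1)$. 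Boundedness of $\hat u_n$ and $\tilde u_n$ in $L^{2}(\Omega)$, combined with the norm equivalence on the finite-dimensional $\bar H$, gives $\|u_n\|_2\to\infty$ and $\|\bar u_n\|_2/\|u_n\|_2\to 1$. Compactness of the unit sphere in $\bar H$ lets me pass to a further subsequence along which $\bar u_n/\|\bar u_n\|_2 \to \phi_0$ in $L^{2}(\Omega)$ for some $\phi_0\in\bar H$, whence $u_n/\|u_n\|_2\to\phi_0$ in $L^{2}(\Omega)$. Hypothesis \ref{SC} is thus applicable and forces the bracket to tend to $\pm\infty$, contradicting boundedness of $\mathcal{E}(u_n)$.

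With $(u_n)$ bounded in $H$, Rellich compactness provides a subsequence $u_{n_k}\rightharpoonup u$ in $H$ with $u_{n_k}\to u$ in $L^{2}(\Omega)$. Using $(\nabla\mathcal{E}(u_{n_k}), u_{n_k})\to (v,u)$, the strong $L^{2}$-convergence of $u_{n_k}$, and dominated convergence for $g(u_{n_k})\to g(u)$ in $L^{2}(\Omega)$ (since $g$ is bounded and continuous), the identity
\begin{equation*}
\|u_{n_k}\|^{2} = (\nabla\mathcal{E}(u_{n_k}), u_{n_k}) + \lambda_k\int_\Omega u_{n_k}^{2}\,{\rm d}x - \int_\Omega g(u_{n_k})u_{n_k}\,{\rm d}x + \int_\Omega f u_{n_k}\,{\rm d}x
\end{equation*}
delivers $\|u_{n_k}\|^{2}\to\|u\|^{2}$, which upgrades the weak convergence to strong convergence in $H$. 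The main obstacle is the second step: one must be sure that blow-up of the $\bar H$-component alone suffices to activate \ref{SC}, i.e., that $u_n/\|u_n\|_2$ converges in $L^{2}(\Omega)$ to an element of $\bar H$. This is exactly what the prior control of $\hat u_n$ and $\tilde u_n$ provides, so the order of the steps is essential.
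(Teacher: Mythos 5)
Your proposal is correct, and its overall skeleton (control $\hat u_n$ and $\tilde u_n$ via Lemmas \ref{L21} and \ref{L22}, isolate the term $\int_\Omega G(u_n)\,{\rm d}x-\int_\Omega \bar f u_n\,{\rm d}x$ in the energy, invoke \ref{SC} for a contradiction, then run the standard Rellich plus norm-convergence argument) coincides with the paper's. Where you genuinely diverge is in how the hypothesis of \ref{SC} is verified. The paper assumes $\|u_n\|_2\to\infty$, normalizes $v_n=u_n/\|u_n\|_2$, extracts a weak limit $v$ in $H$ from the bound supplied by $\mathcal{E}(u_n)/\|u_n\|_2^2\to 0$, and identifies $v$ as an eigenfunction by passing to the limit in $(\nabla\mathcal{E}(u_n),w)/\|u_n\|_2\to 0$ for every test function $w$; only afterwards does it bound $\hat u_n$ and $\tilde u_n$. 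You instead establish the bounds on $\hat u_n$ and $\tilde u_n$ unconditionally first (using only boundedness of $\nabla\mathcal{E}(u_n)$, which is all the stated hypothesis $\nabla\mathcal{E}(u_n)\to v$ gives you anyway), and then obtain $u_n/\|u_n\|_2\to\phi_0\in\bar H$ from finite-dimensional compactness of the unit sphere of $\bar H$, since the complementary components are negligible after normalization. This is more economical: the a priori component bounds are needed for the energy decomposition in any case, so the paper's weak-convergence identification of $\phi_0$ through the equation becomes redundant in your ordering. One small point: in the final step, the claim that the right-hand side of your identity converges to $\|u\|^2$ tacitly uses $(v,u)=(\nabla\mathcal{E}(u),u)$, i.e., the identification $v=\nabla\mathcal{E}(u)$ obtained by passing to the limit in $(\nabla\mathcal{E}(u_{n_k}),w)$; the paper sidesteps this by pairing $\nabla\mathcal{E}(u_{n_k})$ with $u_{n_k}-u$ directly, which is marginally cleaner, but your version is standard and correct.
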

\begin{proof}
In the first step we prove that $(u_{n})$ is bounded in $L^{2}(\Omega)$. Assume the contrary, \textit{i.e.}, $\| u_{n}\|_{2} \rightarrow \infty$. Set $v_{n} := \frac{u_{n}}{\| u_{n} \|_{2}}$. Then
\begin{equation}\label{31}
\frac{\mathcal{E} (u_{n})}{\| u_{n} \|_{2}^{2}} := \frac{1}{2} \int\limits_{\Omega} |\nabla v_{n}|^{2} \, {\rm d}x- \frac{\lambda_{k}}{2}
\int\limits_{\Omega} {(v_{n})}^{2} \, {\rm d} x + \int\limits_{\Omega} \frac{G(u_{n})}{\| u_{n} \|_{2}^{2}} \, {\rm d} x -
\frac{1}{\| u_{n} \|_{2}} \int\limits_{\Omega} f v_{n} \, {\rm d} x \rightarrow 0.
\end{equation}
The second term is equal to $- \frac{\lambda_{k}}{2}$ since $\|v_{n} \|_{2} =1$, the last two terms go to zero
since
$$
\left| \frac{1}{\| u_{n} \|_{2}}\int\limits_{\Omega} f v_{n} \, {\rm d} x \right| \leq \frac{\| f \|_{2}}{\| u_{n} \|_{2}} \rightarrow 0
$$
and
\begin{equation*}
\begin{aligned}
\left| \int\limits_{\Omega} \frac{G(u_{n})}{\| u_{n} \|_{2}^{2}} \, {\rm d} x \right| & =
\frac{1}{\| u_{n} \|_{2}^{2}} \left| \int\limits_{\Omega} \left( \int\limits_{0}^{u_{n}(x)} g(s) \, {\rm d} s \right) \, {\rm d} x \right| \\
& \leq \frac{1}{\| u_{n} \|_{2}^{2}} \sup_{s\in \mathbb{R}} | g(s)| \cdot \int\limits_{\Omega} | u_{n}(x)| \, {\rm d} x
\leq \frac{\rm c}{\| u_{n} \|_{2}} \rightarrow 0
\end{aligned}
\end{equation*} (for some $c > 0$)
by the embedding $L^{2}(\Omega) \hookrightarrow  L^{1}(\Omega)$. Then it follows from \eqref{31}
that $(v_{n})$ is a bounded sequence in $H$. Passing to a subsequence, if necessary, we may assume that
there exists $v \in H$ such that $v_{n} \rightharpoonup v$ (weakly) in $H$ and $v_{n} \rightarrow v$ in $L^{2}(\Omega)$.

For arbitrary $w \in H$,
\begin{equation}\label{32}
\begin{aligned}
0 \leftarrow \frac{( \nabla \mathcal{E}'(u_{n}), w)}{\| u_{n} \|_{2}} = \int\limits_{\Omega} \nabla v_{n} \nabla w  \, {\rm d}x & - \lambda_{k}
\int\limits_{\Omega} v_{n}w \, {\rm d} x \\
& + \frac{1}{\| u_{n} \|_{2}} \int\limits_{\Omega} g(u_{n})w \, {\rm d} x -
\frac{1}{\| u_{n} \|_{2}} \int\limits_{\Omega} f w \, {\rm d} x.
\end{aligned}
\end{equation}
We have $\int\limits_{\Omega} \nabla v_{n} \nabla w  \, {\rm d}x  \rightarrow \int\limits_{\Omega} \nabla v \nabla w  \, {\rm d}x$ by $v_{n} \rightharpoonup v$ in $H$, $\int\limits_{\Omega} v_{n}w  \, {\rm d}x
\rightarrow \int\limits_{\Omega} v w  \, {\rm d}x$ by $v_{n} \rightarrow v$ in $L^{2}(\Omega)$,
$\frac{1}{\| u_{n} \|_{2}} \int\limits_{\Omega} f w \, {\rm d} x \rightarrow 0$, $ \frac{1}{\| u_{n} \|_{2}} \int\limits_{\Omega} g(u_{n})w \, {\rm d} x \rightarrow 0$ by $f \in L^{2}(\Omega)$, the boundedness of $g$ and by our assumption
$\| u_{n}\|_{2} \rightarrow \infty$. Then it follows from \eqref{32} that
$$
\int\limits_{\Omega} \nabla v \nabla w  \, {\rm d}x - \lambda_{k}
\int\limits_{\Omega} v w \, {\rm d} x =0
$$
holds for arbitrary $w \in H$, \textit{i.e.}, $v= \phi_{0} \in \bar{H}$ is an eigenfunction associated with $\lambda_{k}$.
That is, $\frac{u_{n}}{\| u_{n} \|_{2}} \rightarrow \phi_{0}$ in $L^{2}(\Omega)$.

Now, by the assumption $\nabla \mathcal{E}(u_{n}) \rightarrow o$ and the orthogonal decomposition of $H$,
\begin{equation}\label{33}
\begin{aligned}
o(\| \hat{u}_{n} \|) = ( \nabla \mathcal{E}(u_{n}), \hat{u}_{n}) = \int\limits_{\Omega} | \nabla \hat{u}_{n}|^{2} \, {\rm d}x & - \lambda_{k} \int\limits_{\Omega} (\hat{u}_{n})^{2} \, {\rm d} x \\
& + \int\limits_{\Omega} g(u_{n})\hat{u}_{n} \, {\rm d} x - \int\limits_{\Omega} f \hat{u}_{n} \, {\rm d} x.
\end{aligned}
\end{equation}
By Lemma \ref{L21} it follows from \eqref{33} that
$$o(1) \leq - c_{1} \| \hat{u}_{n} \| + c_{2}$$
with $c_{1}, c_{2} > 0$ independent of $n$. Hence $\| \hat{u}_{n} \| $ is a bounded sequence.

Similarly, we also have
\begin{equation}\label{34}
\begin{aligned}
o(\| \tilde{u}_{n} \|) = ( \nabla \mathcal{E}(u_{n}), \tilde{u}_{n}) = \int\limits_{\Omega} | \nabla \tilde{u}_{n}|^{2} \, {\rm d}x & - \lambda_{k} \int\limits_{\Omega} (\tilde{u}_{n})^{2} \, {\rm d} x \\
& + \int\limits_{\Omega} g(u_{n})\tilde{u}_{n} \, {\rm d} x - \int\limits_{\Omega} f \tilde{u}_{n} \, {\rm d} x.
\end{aligned}
\end{equation}
By Lemma \ref{L22} it follows from \eqref{34} that
$$o(1) \geq c_{3} \| \tilde{u}_{n} \| - c_{4}$$
with $c_{3}, c_{4} > 0$ independent of $n$. Hence $\| \tilde{u}_{n} \|$ is a bounded sequence. Let us split now $\mathcal{E}(u_{n})$ as follows
\begin{equation*}
\begin{aligned}
\mathcal{E} (u_{n}) & = \underbrace{\frac{1}{2} \int\limits_{\Omega} |\nabla \hat{u}_{n}|^{2} \, {\rm d}x- \frac{\lambda_{k}}{2}
\int\limits_{\Omega} (\hat{u}_{n})^{2} \, {\rm d} x}_{A} + \underbrace{\frac{1}{2} \int\limits_{\Omega} |\nabla \tilde{u}_{n}|^{2} \, {\rm d}x  - \frac{\lambda_{k}}{2}
\int\limits_{\Omega} (\tilde{u}_{n})^{2} \, {\rm d} x}_{B} \\
& + \underbrace{\int\limits_{\Omega} G(u_{n})\, {\rm d} x -
 \int\limits_{\Omega} \bar{f} u_{n} \, {\rm d} x}_{C} - \underbrace{\int\limits_{\Omega} f^{\bot} \hat{u}_{n} \, {\rm d} x
 -\int\limits_{\Omega} f^{\bot} \tilde{u}_{n} \, {\rm d} x}_{D}.
\end{aligned}
\end{equation*}
The boundedness of $\| \hat{u}_{n} \|$ and $\| \tilde{u}_{n} \|$ implies that $A, B$ and $D$ are bounded
terms. On the other hand, $(\rm SC)_{+}$ forces $C \rightarrow + \infty$ and $(\rm SC)_{-}$ forces $C \rightarrow -\infty$. In particular, we conclude
$\mathcal{E} (u_{n}) \rightarrow \pm \infty$ which contradicts the assumption of the boundedness of
$(\mathcal{E} (u_{n}))$. We thus proved that $(u_{n})$ is a bounded sequence in $L^{2}(\Omega)$.

In the second step we select a strongly convergent subsequence (in $H$) from $(u_{n})$. Let us examine again the terms in
$$
\mathcal{E} (u_{n}) := \frac{1}{2} \int\limits_{\Omega} |\nabla u_{n}|^{2} \, {\rm d}x- \frac{\lambda_{k}}{2}
\int\limits_{\Omega} (u_{n})^{2} \, {\rm d} x + \int\limits_{\Omega} G(u_{n}) \, {\rm d} x -
\int\limits_{\Omega} f u_{n} \, {\rm d} x.
$$
By the assumption $\mathcal{E} (u_{n}) $ is bounded. The boundedness of the sequence $(u_{n})$ in $L^{2}(\Omega)$ implies that
$\int\limits_{\Omega} (u_{n})^{2} \, {\rm d} x $, $ \int\limits_{\Omega} G(u_{n}) \, {\rm d} x$ and
$ \int\limits_{\Omega} f u_{n} \, {\rm d} x$ are bounded independently of $n$, as well. Therefore,
$ \| u_{n}\|^{2} = \int\limits_{\Omega} |\nabla u_{n}|^{2} \, {\rm d}x$ must be also bounded. Hence,
we may assume, without lost of generality, that $u_{n} \rightharpoonup u$ in $H$ for some $u \in H$, and
$u_{n} \rightarrow u$ in $L^{2}(\Omega)$. Then
\begin{equation*}
\begin{aligned}
0 \leftarrow ( \nabla \mathcal{E}(u_{n}), u_{n} - u)= \int\limits_{\Omega} \nabla u_{n} \nabla (u_{n} - u) \, {\rm d}x & - \lambda_{k}
\int\limits_{\Omega} u_{n}(u_{n} - u)\, {\rm d} x \\
& + \int\limits_{\Omega} g(u_{n})(u_{n} - u) \, {\rm d} x -
\int\limits_{\Omega} f (u_{n} - u) \, {\rm d} x.
\end{aligned}
\end{equation*}
Since
$$
- \lambda_{k}
\int\limits_{\Omega} u_{n}(u_{n} - u)\, {\rm d} x
+ \int\limits_{\Omega} g(u_{n})(u_{n} - u) \, {\rm d} x -
\int\limits_{\Omega} f (u_{n} - u) \, {\rm d} x \rightarrow 0,
$$
we conclude that
$$
\int\limits_{\Omega} \nabla u_{n} \nabla (u_{n} - u) \, {\rm d}x \rightarrow 0
$$
as well. So,
$$
\int\limits_{\Omega} |\nabla u_{n}|^{2} \, {\rm d}x - \int\limits_{\Omega} \nabla u_{n} \nabla u \, {\rm d}x  \rightarrow 0
$$
which together with
$$
\int\limits_{\Omega} \nabla u_{n} \nabla u \, {\rm d}x  \rightarrow \| u_{n}\|^{2}
$$
(this is due to the weak convergence $u_{n} \rightharpoonup u$) yields
$$
\| u_{n}\| \rightarrow \| u\|.
$$
The uniform convexity of $H$ then implies that $u_{n} \rightarrow u$ in $H$. Hence $\mathcal{E}$ satisfies the condition
$({\rm c})$ in Theorem \ref{T31}.
\end{proof}

Now we prove that also $({\rm a})$ and $({\rm b})$ hold. To this end we have to consider separately the case
$(\rm SC)_{+}$ and $(\rm SC)_{-}$.

\vspace{5mm} \noindent
1. Let us assume that $(\rm SC)_{+}$ holds. We set
$$ H^{-} := \hat{H}, \quad H^{+} := \bar{H} \oplus \tilde{H}. $$
It follows from Lemma \ref{L21} and \ref{L23} that
\begin{equation}\label{35}
\begin{aligned}
\lim\limits_{\| \hat{u}\| \rightarrow \infty}
\mathcal{E} (\hat{u}) := \lim\limits_{\| \hat{u}\| \rightarrow \infty} \left[ \frac{1}{2} \int\limits_{\Omega} |\nabla \hat{u}|^{2} \, {\rm d}x \right. & - \frac{\lambda_{k}}{2}
\int\limits_{\Omega} (\hat{u})^{2} \, {\rm d} x \\
& \left. + \int\limits_{\Omega} G(\hat{u}) \, {\rm d} x -
\int\limits_{\Omega} f \hat{u} \, {\rm d} x\right] = -\infty.
\end{aligned}
\end{equation}
On the other hand, we prove that there exists $\beta \in \mathbb{R}$ such that
$$
\inf\limits_{u \in H^{+}} \mathcal{E} (u) \geq \beta.
$$
Assume the contrary, that is, there exists a sequence $(u_{n}) \subset H^{+}$ such that
\begin{equation}\label{36}
\begin{aligned}
\lim\limits_{n \rightarrow \infty}
\mathcal{E} (u_{n}) = - \infty.
\end{aligned}
\end{equation}
Then  $\| u_{n}\|_{2} \rightarrow \infty$, and for $v_{n} := \frac{u_{n}}{\| u_{n} \|_{2}}$
($v_{n} \in H^{+}$) we have
\begin{equation}\label{37}
\begin{aligned}
0 \geq \limsup\limits_{n \rightarrow \infty}
\frac{\mathcal{E} (u_{n})}{\| u_{n}\|_{2}^{2}} := \limsup\limits_{n \rightarrow \infty} \left[ \frac{1}{2} \int\limits_{\Omega} |\nabla v_{n}|^{2} \, {\rm d}x \right. & - \frac{\lambda_{k}}{2}
\int\limits_{\Omega} (v_{n})^{2} \, {\rm d} x \\
& \left. + \int\limits_{\Omega} \frac{G(u_{n})}{\| u_{n}\|_{2}^{2}} \, {\rm d} x -
\int\limits_{\Omega} f \frac{v_{n}}{\| u_{n}\|_{2}} \, {\rm d} x\right].
\end{aligned}
\end{equation}
Clearly, by Lemma \ref{L23}, we have
\begin{equation}\label{38}
\begin{aligned}
\int\limits_{\Omega} \frac{G(u_{n})}{\| u_{n}\|_{2}^{2}} \, {\rm d} x -
\int\limits_{\Omega} f \frac{v_{n}}{\| u_{n}\|_{2}} \, {\rm d} x \rightarrow 0 .
\end{aligned}
\end{equation}
It follows from \eqref{37} and \eqref{38} that $\| v_{n}\|$ is bounded. Passing to
a subsequence if necessary, we may assume that there exists $v \in H^{+}$ such that
$v_{n} \rightharpoonup v$ in $H$ and $v_{n} \rightarrow v$ in $L^{2}(\Omega)$. Moreover,
\begin{equation}\label{39}
\begin{aligned}
\liminf\limits_{n \rightarrow \infty}
\int\limits_{\Omega} |\nabla v_{n}|^{2} \, {\rm d}x \geq \int\limits_{\Omega} |\nabla v|^{2} \, {\rm d}x
\end{aligned}
\end{equation}
by the weak lower semicontinuity of the norm in $H$.
We deduce from \eqref{37} - \eqref{39} that
$$
\int\limits_{\Omega} |\nabla v|^{2} \, {\rm d}x  - \lambda_{k}
\int\limits_{\Omega} (v)^{2} \, {\rm d} x \leq 0,
$$
and hence, from Lemma \ref{L22}, it follows that $v = \phi_{0} \in \bar{H}$ is an eigenfunction
associated with $\lambda_{k}$. That is,
$$
\frac{u_{n}}{\| u_{n}\|_{2}} \rightarrow \phi_{0} \quad {\text {in}} \quad L^{2}(\Omega).
$$
By Lemma \ref{L22}, by the properties of the orthogonal decomposition of $H^{+}$ and $f$ and
by the condition $(\rm SC)_{+}$, we have for $u_{n} \in H^{+}$:
$$
\begin{aligned}
\lim\limits_{n \rightarrow \infty}
\mathcal{E} (u_{n}) := & \lim\limits_{n \rightarrow \infty} \left[ \frac{1}{2} \int\limits_{\Omega} |\nabla u_{n}|^{2} \, {\rm d}x \right. - \frac{\lambda_{k}}{2}
\int\limits_{\Omega} (u_{n})^{2} \, {\rm d} x 
 \left. + \int\limits_{\Omega} G(u_{n}) \, {\rm d} x -
\int\limits_{\Omega} f u_{n} \, {\rm d} x\right] \\
= & \lim\limits_{n \rightarrow \infty} \left[ \frac{1}{2} \int\limits_{\Omega} |\nabla \tilde{u}_{n}|^{2} \, {\rm d}x \right. - \frac{\lambda_{k}}{2}
\int\limits_{\Omega} (\tilde{u}_{n})^{2} \, {\rm d} x \\
& \left. + \int\limits_{\Omega} G(u_{n}) \, {\rm d} x -
\int\limits_{\Omega} \bar{f} u_{n} \, {\rm d} x - \int\limits_{\Omega} f^{\bot} \tilde{u}_{n} \, {\rm d} x\right] \\
 \geq  & \lim\limits_{n \rightarrow \infty} \left[c_{3} \| \tilde{u}_{n}\|^{2} - \| f^{\bot}\|_{2} 
\| \tilde{u}_{n}\|_{2} \right] + \lim\limits_{n \rightarrow \infty} \left[ \int\limits_{\Omega} G(u_{n}) \, {\rm d} x -
\int\limits_{\Omega} \bar{f} u_{n} \, {\rm d} x \right] \\
 = & + \infty.
\end{aligned}
$$
This contradicts \eqref{36}.

By \eqref{35} there exists $R >0$ such that for $ D := \{ u \in H^{-}: \| u \|\leq R \} $ the following inequality 
holds
$$
\sup\limits_{u \in \, \partial D} \mathcal{E} (u) < \alpha := \beta -1.
$$
Hence, we proved (a) and (b) in Theorem \ref{T31}.

\vspace{5mm} \noindent
2. Let us assume that $(\rm SC)_{-}$ holds. In this case we set
$$ H^{-} := \hat{H} \oplus \bar{H}, \quad H^{+} := \tilde{H}. $$
Let $u \in H^{+}$. Then by Lemmas \ref{L22} and \ref{L23} we have
\begin{equation*}
\begin{aligned}
\mathcal{E} (u) & :=  \frac{1}{2} \int\limits_{\Omega} |\nabla u|^{2} \, {\rm d}x - \frac{\lambda_{k}}{2}
\int\limits_{\Omega} (u)^{2} \, {\rm d} x + \int\limits_{\Omega} G(u) \, {\rm d} x -
\int\limits_{\Omega} f u \, {\rm d} x\\ 
& \; \geq c_{3} \| u\|^{2} - c_{5} \| u \|_{2} \geq c_{3} \| u\|^{2} - c_{6} \| u \|.
\end{aligned}
\end{equation*}
Hence there exists $\beta \in \mathbb{R}$ such that $\mathcal{E} (u)\geq \beta$ for all $u \in H^{+}$.
On the other hand, we prove that 
\begin{equation}\label{310}
\lim\limits_{\| u \| \rightarrow \infty, u \in H^{-}} \mathcal{E} (u) = - \infty.
\end{equation}
Notice, that $\dim H^{-} < \infty$ implies that the norms $\| \cdot \| $ and $\| \cdot \|_{2}$ are equivalent
on $H^{-}$. Assume by the contradiction that \eqref{310} does not hold, \textit{i.e.}, 
there exist a sequence $(u_{n}) \subset H^{-}$ and a constant $c \in \mathbb{R}$ such that
$\| u_{n}\|_{2} \rightarrow \infty$ and
\begin{equation}\label{311}
\mathcal{E} (u_{n}) \geq c.
\end{equation}
Set $v_{n} := \frac{u_{n}}{\| u_{n} \|_{2}}$. Due to $\dim H^{-} < \infty$ we may assume that there
exists $v \in H^{-}$ such that $v_{n} \rightarrow v$ both in $H$ and $L^{2}(\Omega)$. Then 
\begin{equation}\label{312}
\begin{aligned}
0 \leq & \liminf\limits_{n \rightarrow \infty}
\frac{\mathcal{E} (u_{n})}{\| u_{n}\|_{2}^{2}} = \liminf\limits_{n \rightarrow \infty} \left[ \frac{1}{2} \int\limits_{\Omega} |\nabla v_{n}|^{2} \, {\rm d}x \right. - \frac{\lambda_{k}}{2}
\int\limits_{\Omega} (v_{n})^{2} \, {\rm d} x \\
& \left. + \int\limits_{\Omega} \frac{G(u_{n})}{\| u_{n}\|_{2}^{2}} \, {\rm d} x -
\int\limits_{\Omega} f \frac{v_{n}}{\| u_{n}\|_{2}} \, {\rm d} x\right] 
= \frac{1}{2} \int\limits_{\Omega} |\nabla v|^{2} \, {\rm d}x  - \frac{\lambda_{k}}{2}
\int\limits_{\Omega} (v)^{2} \, {\rm d} x,
\end{aligned}
\end{equation}
by Lemma \ref{L23}. According to Lemma \ref{L21}, \eqref{312} implies $v = \phi_{0} \in \bar{H}$,
an eigenfunction associated with $\lambda_{k}$. Hence $\frac{u_{n}}{\| u_{n} \|_{2}} \rightarrow \phi_{0}$
in $L^{2}(\Omega)$. Now, it follows from the orthogonal decomposition of $H^{-}$ and $f$, Lemma \ref{L21}
and $(\rm SC)_{-}$ that for $u_{n} \in H^{-}$,
$$
\begin{aligned}
\lim\limits_{n \rightarrow \infty}
\mathcal{E} (u_{n}) := & \lim\limits_{n \rightarrow \infty} \left[ \frac{1}{2} \int\limits_{\Omega} |\nabla u_{n}|^{2} \, {\rm d}x \right. - \frac{\lambda_{k}}{2}
\int\limits_{\Omega} (u_{n})^{2} \, {\rm d} x
 \left. + \int\limits_{\Omega} G(u_{n}) \, {\rm d} x -
\int\limits_{\Omega} f u_{n} \, {\rm d} x\right] \\
 = & \lim\limits_{n \rightarrow \infty} \left[ \frac{1}{2} \int\limits_{\Omega} |\nabla \hat{u}_{n}|^{2} \, {\rm d}x \right. - \frac{\lambda_{k}}{2}
\int\limits_{\Omega} (\hat{u}_{n})^{2} \, {\rm d} x \\
& \left. + \int\limits_{\Omega} G(u_{n}) \, {\rm d} x -
\int\limits_{\Omega} \bar{f} u_{n} \, {\rm d} x - \int\limits_{\Omega} f^{\bot} \hat{u}_{n} \, {\rm d} x\right] \\
\leq & \lim\limits_{n \rightarrow \infty} \left[- c_{1} \| \hat{u}_{n}\|^{2} + c_{2}
\| \hat{u}_{n}\| \right] + \lim\limits_{n \rightarrow \infty} \left[ \int\limits_{\Omega} G(u_{n}) \, {\rm d} x -
\int\limits_{\Omega} \bar{f} u_{n} \, {\rm d} x \right] \\
= & - \infty.
\end{aligned}
$$
This contradicts \eqref{311}, \textit{i.e.}, \eqref{310} holds true. Let us choose again
$ D := \{ u \in H^{-}: \| u \|\leq R \} $. Then, for $R >0$ large enough, we have
$$
\sup\limits_{u \in \, \partial D} \mathcal{E} (u) < \alpha := \beta -1.
$$
and (a) and (b) in Theorem \ref{T31} are proved.

Recall that the hypothesis (c) in Theorem \ref{T31} is proved in Lemma \ref{L32}
for both cases \ref{SC}. It then follows from Theorem \ref{T31} that under
the assumptions \ref{SC} there exists a critical point of $\mathcal{E}$. Since this is also a solution
of \eqref{11}, the proof of Theorem \ref{T11} is finished.


\vspace{10mm}

\noindent {\bf Acknowledgment.}
This research was
supported by the Grant 13-00863S of the Grant Agency
of Czech Republic and by the European Regional Development Fund (ERDF), project "NTIS – New Technologies for the Information Society", European Centre of Excellence, CZ.1.05/1.1.00/02.0090.

\end{document}